 \newtheorem{thm}{Theorem}[section]
 \newtheorem{lem}[thm]{Lemma}
 \theoremstyle{definition}
 \newtheorem{defn}[thm]{Definition}
 \theoremstyle{remark}
 \newtheorem{rem}[thm]{Remark}
 \newtheorem*{ex}{Example}
 \numberwithin{equation}{section}
\newcommand{\N}{\mathbb{N}}
\newcommand{\Z}{\mathbb{Z}}
\newcommand{\C}{\mathbb{C}}
\newcommand{\F}{\mathbb{F}}
\renewcommand{\P}{\mathbb{P}} 
\newcommand{\G}{\Gamma} 
\newcommand{\Tr}{\mathcal{T}} 
\renewcommand{\c}{\mathbf{c}} 
\newcommand{\I}{\mathcal{I}} 
\newcommand{\g}{\gamma}
\renewcommand{\b}{\beta}
\newcommand{\e}{\overline{e}}
\renewcommand{\O}{\Omega}
\renewcommand{\o}{\omega}
\newcommand{\Hom}{\mathrm{Hom}}
\newcommand{\act}[1]{\mathop{\big\|}_{\scriptstyle{#1}}}
\renewcommand{\matrix}[4]{\left(\begin{array}{cc} {#1} & {#2} \\ {#3} & {#4} \end{array}\right)}
\begin{document}

\title[Atkin $U_t$-operator for $\Gamma_1(t)$-invariant Drinfeld cusp
forms]{On the Atkin $U_t$-operator for $\Gamma_1(t)$-invariant Drinfeld cusp
forms}

\author{Andrea Bandini}

\address{{\sc Andrea Bandini}: Universit\`a degli Studi di Parma \\
   Dipartimento di Scienze Matematiche, Fisiche e Informatiche \\
   Parco Area delle Scienze, 53/A \\
   43124 Parma - Italy
   }

\email{andrea.bandini@unipr.it}

\author{Maria Valentino}\thanks{M. Valentino is supported by an outgoing Marie-Curie fellowship of INdAM}
   \address{{\sc Maria Valentino}: Universit\`a degli Studi di Parma \\
   Dipartimento di Scienze Matematiche, Fisiche e Informatiche \\
   Parco Area delle Scienze, 53/A \\
   43124 Parma - Italy}

   \email{maria.valentino@unipr.it}

   \subjclass[2010]{Primary 11F52, 11F25; Secondary 11B65, 20E08, 11C20.}

   \keywords{Drinfeld cusp forms, harmonic coycles, Atkin-Lehner operator, diagonalizability}

\begin{abstract}
We study the diagonalizability of the Atkin $U_t$-operator acting on Drinfeld cusp forms for $\G_1(t)$ and $\G(t)$ using
Teitelbaum's interpretation as harmonic cocycles. For small weights $k\leqslant 2q$, we prove $U_t$ is diagonalizable in
odd characteristic and we point out that non diagonalizability in even characteristic depends on 
antidiagonal blocks.\end{abstract}

\maketitle

\section{Introduction}
Let $F:=\mathbb{F}_{q}(t)$, with $q=p^r$ and $p\in \Z$ a prime, and 
let $A:=\F_q[t]$ be the ring of functions regular outside $\infty:=\frac{1}{t}$.
Let $F_\infty$ be the completion of $F$ at $\infty$ and $\C_\infty$ be the completion of an algebraic closure of $F_\infty$.
Drinfeld cusp forms (defined over the Drinfeld upper half plane $\O:= \P^1(\C_\infty) - \P^1(F_\infty)\,$) 
admit a natural action of {\em Hecke operators} ${\bf T}_\mathfrak{p}$ (as $\mathfrak{p}$ varies
among the primes of $A$). In this paper we deal with the action of $U_t:={\bf T}_{(t)}$ on
$S^1_{k,m}(\G)$ (the space of $\G$-invariant cusp forms of weight $k$ and type $m$) for the congruence groups
$\G=\G_1(t),\G(t)$ of {\em level} $t$. 

In this context the question about the diagonalizability of the Hecke operators is still open, due mainly to the
lack of an adequate analogous of Petersson inner product.
For the operators ${\bf T}_{\mathfrak{p}}$ with $\mathfrak{p}\neq (t)$ generated by a polynomial of degree 1, 
Li and Meemark in \cite{LM} checked diagonalizability for $k\leqslant q+3$, i.e., until they found the first
example of a non diagonalizable matrix (in even characteristic) together with the presence of an inseparable eigenvalue
(see \cite{LM}, in particular, the Remark in page 1951).
B\"ockle and Pink  computed the structure of double cusp forms for $\G_1(t)$ for some fixed $k$ and $q$ 
(\cite[Section 15]{B}) and for those of weight 4 they determined all eigenvalues (\cite[Proposition 15.6]{B}).

Using the Bruhat-Tits tree $\Tr$ as a combinatorial counterpart for $\O$, Teitelbaum in \cite{T}
provided a reinterpretation of cusp forms as $\G$-invariant harmonic cocycles.
In \cite{BVHP1} (inspired by the computations of \cite{LM}), we used this interpretation
to get the matrix corresponding to the action of $U_t$ on $S^1_{k,m}(\G_1(t))$, but then focused only on
the blocks of that matrix associated to cusp forms for $\G_0(t)$. Here we consider the whole matrix: a careful study of
the coefficients will allow us to discuss the diagonalizability of $U_t$ in detail for weights $k\leqslant 2q$.

\noindent After recalling the basic definitions and properties of the objects we shall work with,
in Section \ref{SecGamma1} we consider the action of $U_t$ on of $S^1_k(\G_1(t))$ with respect to the basis
$\mathcal{B}^1_k(\G_1(t)):=\{\c_j(\e)\,,\,0\leqslant j\leqslant k-2\}$. The associated matrix has (at most) 
$q-1$ blocks (one for each residue class modulo $q-1$) and $U_t$ is diagonalizable if and only if each block is. 
We denote by $M_j$ ($0\leqslant j\leqslant q-2$) the block that describes the action of $U_t$ on the subspace with basis
$C_j:=\{\c_\ell\in \mathcal{B}^1_k(\G_1(t))\,:\,\ell\equiv j \pmod{q-1}\}$.
Then we study diagonalizability of the $M_j$ for $k\leqslant q+3$. 
In particular we show that (see Theorems \ref{Thm1} and \ref{Thm1G1})\\
\indent{\bf 1.} if $k\leqslant q+2$, then $U_t$ is diagonalizable;\\
\indent{\bf 2.} if $k=q+3$, then $U_t$ is diagonalizable if and only if $q$ is odd.

\noindent Non diagonalizability here depends on the antidiagonal form of one of the $M_j$ (which also leads to the presence of
an inseparable eigenvalue). We have already encountered this phenomenon in \cite{BVHP1} while studying the action
of $U_t$ on cusp forms for $\G_0(t)$. Among the $M_j$ there are two blocks (if $q$ is odd, just one if $q$ is even)
which depend on $\G_0(t)$-invariant cusp forms (see \cite[Section 4.3]{BVHP1}) and the action on newforms (i.e., those
of proper level $t$) has the tendency to being antidiagonal (see \cite[Theorem 5.2]{BVHP1} for a partial result
in this direction). Hence we believe those blocks would be diagonalizable only for $q$ odd but it is interesting
anyway to check diagonalizability of the remaining ones.

\noindent In Section \ref{SecDim2} we study  all blocks of dimension 2 and obtain (see Theorem \ref{ThmDiag2OddChar}
and Section \ref{SecDiagp=2})

\begin{thm}
Assume $|C_j|=2$, then $M_j$ is diagonalizable unless $q$ is even, $k$ is odd and $M_j$ is antidiagonal.
\end{thm}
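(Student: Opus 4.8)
The plan is to start from the explicit shape of the matrix of $U_t$ computed in Section \ref{SecGamma1} (building on \cite{BVHP1}). When $|C_j|=2$, say $C_j=\{\c_{\l_1},\c_{\l_2}\}$ with $\l_2=\l_1+(q-1)$, the block $M_j$ is a $2\times 2$ matrix $\matrix{a}{b}{c}{d}$ whose entries are, up to sign, products of binomial coefficients modulo $p$ depending on $k$, $q$ and $j$. I would then invoke the elementary criterion: a $2\times 2$ matrix $\matrix{a}{b}{c}{d}$ over a field is diagonalizable if and only if it is scalar or its characteristic polynomial $\lambda^2-(a+d)\lambda+(ad-bc)$ is separable, the latter failing, in characteristic $\neq 2$, exactly when $(a-d)^2+4bc=0$, and, in characteristic $2$, exactly when $a=d$ (in which case the characteristic polynomial becomes $(\lambda+a)^2+bc$ and has the double root $a+\sqrt{bc}$, the square root existing since $\F_q$ is perfect). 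So everything reduces to reading these conditions off the formulas for $a,b,c,d$.

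For $q$ odd I would argue by cases on which of $b,c$ vanish. If $b=c=0$ the block is diagonal and there is nothing to do. If exactly one of $b,c$ is zero, $M_j$ is triangular, and I would check from the explicit formulas that its two diagonal entries are then distinct, so $M_j$ is diagonalizable. If $b$ and $c$ are both nonzero and $a=d$, then $(a-d)^2+4bc=4bc\neq 0$ since $p\neq 2$. The remaining case, $b,c\neq 0$ and $a\neq d$, is the one needing real work: I would expand $(a-d)^2+4bc$ in terms of the binomial coefficients and show it cannot vanish, the point being that a vanishing would force a relation among the base-$p$ digits of $k$ and $j$ incompatible with $0\leqslant\l_1,\l_2\leqslant k-2$ and $k\leqslant 2q$.

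For $q$ even the dichotomy is sharper. If $a\neq d$ then $a+d\neq 0$, so $\lambda^2+(a+d)\lambda+(ad+bc)$ has nonzero derivative, hence is separable, and $M_j$ is diagonalizable. If $a=d$, the double eigenvalue is $a+\sqrt{bc}$ and $M_j-(a+\sqrt{bc})I=\matrix{\sqrt{bc}}{b}{c}{\sqrt{bc}}$, which vanishes exactly when $b=c=0$; so $M_j$ fails to be diagonalizable precisely when $a=d$ and $(b,c)\neq(0,0)$. It then remains to show that, for the entries coming from the $U_t$-matrix, $a=d$ with $(b,c)\neq(0,0)$ forces $a=d=0$ (so $M_j$ is antidiagonal) and $k$ odd, and conversely that an antidiagonal block with $k$ odd has nonzero off-diagonal entries. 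I would do this directly from the formulas: the two diagonal entries are binomial expressions that, by Lucas' congruence, can only agree when $k$ is odd, and in that regime one of them is already $0$.

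The crux in both characteristics is this last reduction — the base-$p$ digit analysis (via Lucas) that pins down exactly when the two diagonal binomial coefficients coincide, respectively when $(a-d)^2+4bc=0$ without $M_j$ being scalar. This is a finite but delicate bookkeeping, and it is the only place where the parities of $k$ and $q$ really intervene; the rest is just the $2\times 2$ criterion applied to explicit entries.
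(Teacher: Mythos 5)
Your general framework (the elementary $2\times2$ diagonalizability criterion, split by characteristic) is sound, but the proposal defers precisely the step that constitutes the paper's proof, and it misplaces the difficulty. A structural fact you never extract: when $|C_j|=2$ one has $j+2(q-1)>k-2$, which forces $\binom{k-2-j-(q-1)}{j+(q-1)}=0$, so the $(2,2)$ entry of $M_j$ is identically zero. Writing $M_j=\left(\begin{smallmatrix}\pm\alpha t^{j+1} & -\gamma t^{j+q}\\ \pm\beta t^{j+1} & 0\end{smallmatrix}\right)$ with $\alpha=\binom{k-2-j}{j}$, $\beta=\binom{k-1-j-q}{j}$ and $\gamma=\binom{k-2-j}{q-1}+(-1)^{j+1}\binom{k-2-j}{j+q-1}$, your condition ``$a=d$'' becomes simply ``$\alpha=0$'', and ``antidiagonal'' means ``$\alpha=0$ with $\beta,\gamma$ not both zero''. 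Moreover the entries carry distinct powers of $t$, not just residues mod $p$: your anticipated hard case in odd characteristic (showing $(a-d)^2+4bc\neq0$ when $a\neq d$ and $bc\neq 0$) is in fact immediate, since $a^2$ and $4bc$ are nonzero multiples of $t^{2j+2}$ and $t^{2j+q+1}$ respectively and cannot cancel.

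The genuine danger --- which your clause ``if exactly one of $b,c$ is zero \dots its two diagonal entries are then distinct'' quietly assumes away --- is that $\alpha=0$ together with exactly one of $\beta,\gamma$ nonzero would produce a nonzero nilpotent block, non-diagonalizable in \emph{every} characteristic and hence falsifying the theorem already for odd $q$. Ruling this out is the actual content of the paper's Theorem \ref{ThmDiag2OddChar}: a case-by-case analysis over the ranges $j=0$, $j=1$, $2\leqslant j<\frac{k-1-q}{2}$, $j=\frac{k-1-q}{2}$ and $\frac{k-1-q}{2}<j\leqslant k-q-1$, using Lucas' theorem on the base-$p$ digits of $y=k-2-j-q$ and of $j$ to prove that $\alpha=0$ implies $\beta\neq0\Leftrightarrow\gamma\neq0$ (so the block is either null or genuinely antidiagonal), together with the mod-$2$ parity observation ($y\equiv j\pmod 2$ for even $k$ kills one of $\beta,\gamma$) showing that non-diagonalizable antidiagonal blocks occur only for $q$ even and $k$ odd. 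None of this bookkeeping appears in your write-up; it is asserted as something you ``would check from the explicit formulas''. Since that check \emph{is} the theorem, what you have is a correct plan rather than a proof.
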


\noindent Finally, in Section \ref{SecGamma} we study diagonalizability of $U_t$ on $S^1_k(\G(t))$. We show that
$U_t$ has a large kernel which actually reduces its diagonalizability to the previous case 
(see Theorem \ref{ThmDiagGammaGamma1}).

\section{Setting and notations}\label{SecNotations}
As above let $F$ be the rational function field $F =\mathbb{F}_{q}(t)$, fix 
$\frac{1}{t}$ as the prime at $\infty$ and let $A:=\F_q[t]$ be the ring of functions regular outside $\infty$.
Let $F_\infty = \F_q((1/t))$ be the completion of $F$ at $\infty$ and let $\C_\infty$ denote 
the completion of an algebraic closure of $F_\infty$.
The {\em Drinfeld upper half-plane} is the set $\O:=\P^1(\C_\infty) - \P^1(F_\infty)$
together with a structure of rigid analytic space (see \cite{FvdP}).

\subsection{The Bruhat-Tits tree}\label{SecTree}
The Drinfeld's upper half plane has a combinatorial counterpart, the {\em Bruhat-Tits tree} $\Tr$ of $GL_2(F_\infty)$, which
 is a $(q+1)$-regular tree on which $GL_2(F_\infty)$ acts transitively (see, e.g.,
\cite{G3} or \cite{S1}). Let $Z(F_\infty)$ be the center
of $GL_2(F_\infty)$ and let $\I(F_\infty)$ be the {\em Iwahori subgroup}, i.e.,
\[ \I(F_\infty)=\left\{\matrix{a}{b}{c}{d} \in GL_2(A_\infty)\, : \, c\equiv 0 \pmod \infty \right\}\,. \]
Then the sets $X(\Tr)$ of vertices and $Y(\Tr)$ of oriented edges of $\Tr$ are given by
\[ X(\Tr)= GL_2(F_\infty)/Z(F_\infty)GL_2(A_\infty)\quad \mathrm{and}\quad Y(\Tr)= GL_2(F_\infty)/ Z(F_\infty)\I(F_\infty)\,. \]

Two infinite paths in $\Tr$ are considered equivalent if they differ at finitely many edges. An {\em end} is an equivalence
class of infinite paths: the ends identify points in $\P^1(F_\infty)$ via a $GL_2(F_\infty)$-equivariant bijection.
For any arithmetic subgroup $\G$ of $GL_2(A)$ the {\em cusps} of $\G$ are the elements of $\G\backslash \P^1(F)$, which
are in bijection with the ends of $\G\backslash \Tr$ (see \cite[Proposition 3.19]{B}).\\
Following Serre \cite[pag 132]{S1}, we call a vertex or an edge {\em $\G$-stable} if its stabilizer in $\G$ is trivial
and {\em $\G$-unstable} otherwise.

\subsection{Harmonic cocycles}\label{SecHarCoc}
For $k\geqslant 0$ and $m\in\Z$, let $V(k,m)$ be the $\C_\infty$ vector space generated by 
$\{X^jY^{k-2-j}: 0\leqslant j\leqslant k-2 \}$. The action of 
$\gamma= \left( \begin{smallmatrix} a & b  \\ c & d \end{smallmatrix} \right)\in GL_2(F_\infty)$ 
on $V(k,m)$ is given by
\[ \g(X^jY^{k-2-j}) = \det(\g)^{m-1}(dX-bY)^j(-cX+aY)^{k-2-j}\quad {\rm for}\ 0\leqslant j\leqslant k-2\,.\]
For every $\o\in \Hom(V(k,m),\C_\infty)$ we have an induced action of $GL_2(F_\infty)$
\[ (\g\o)(X^jY^{k-2-j})=\det(\g)^{1-m}\o((aX+bY)^j(cX+dY)^{k-2-j})\quad {\rm for}\ 0\leqslant j\leqslant k-2\,. \]

\begin{defn}
A {\em harmonic cocycle of weight $k$ and type $m$} for $\G$ is a function $\c$ from the set of directed edges
of $\Tr$ to $\Hom(V(k,m),\C_\infty)$ satisfying:
\begin{itemize}
\item[{\bf 1.}] ({\em harmonicity}) for all vertices $v$ of $\Tr$,
$\displaystyle{\sum_{t(e)= v}\c(e)=0}$,
where $e$ runs over all edges in $\Tr$ with terminus $v$;
\item[{\bf 2.}] ({\em antisymmetry}) for all edges $e$ of $\Tr$, $\c(\overline{e})=-\c(e)$, where $\overline{e}$ is the edge $e$
with reversed orientation;
\item[{\bf 3.}] ({\em $\G$-equivariancy}) for all edges $e$ and elements $\g\in\G$, $\c(\g e)=\g(\c(e))$.
\end{itemize}
\end{defn}

\noindent The space of harmonic cocycles of weight $k$ and type $m$ for $\G$ will be denoted by $C^{har}_{k,m}(\G)$.
By \cite[Lemma 20]{T}, cocycles in $C^{har}_{k,m}(\G)$ are determined by their
values on the stable (non-oriented) edges of a fundamental domain.

\subsection{Drinfeld modular forms}\label{SecDrinfModForms}
An element $\gamma= \left( \begin{smallmatrix} a & b  \\ c & d \end{smallmatrix} \right)\in GL_2(A)$
acts on $\Omega$ via M\"obius trasformation and for $k,m \in \Z$
and $f:\O\to \C_\infty$, we define
\[ (f \act{k,m}\g)(z) := f(\g z)(\det \g)^m(cz+d)^{-k}. \]

\begin{defn}
A rigid analytic function $f:\O\to \C_\infty$ is called a {\em Drinfeld modular function of weight $k$ and type $m$} 
for an arithmetic group $\G$, if
\begin{equation}\label{Mod} 
(f \act{k,m} \g )(z) =f(z)\ \ \forall \g\in\G\,.  
\end{equation}
A Drinfeld modular function $f$ for $\G$ is called {\em Drinfeld modular form} if
$f$ is holomorphic at all cusps of $\G$.
A Drinfeld modular form $f$ for $\G$ is called {\em cusp form} (resp. {\em double cusp form}) if it vanishes at all
cusps to the order at least 1 (resp. to the order at least 2).
\end{defn}

\noindent The space of Drinfeld cuspidal modular forms (resp. doubly cuspidal)
of weight $k$ and type $m$ for $\G$ will be denoted by $S^1_{k,m}(\G)$ (resp. $S^2_{k,m}(\G)\,$). 
When all elements of $\Gamma$ have determinant 1 (as will happen in all our computations), the type does not play a role. 
In this case all $S^1_{k,m}(\G)$ ($m\in \Z$) are isomorphic and we shall denote them simply by $S^1_k(\Gamma)$ 
(same for $S^2_k(\G)$).

\subsubsection{Cusp forms and harmonic cocycles}\label{SecIsomModFrmHarCoc}
In \cite{T}, Teitelbaum constructed the so-called {\em residue map} which provides an isomorphism 
$S_{k,m}(\G)\simeq C^{har}_{k,m}(\G)$ (\cite[Theorem 16]{T}). For more details the reader is referred 
to the original paper of Teitelbaum or to \cite[Section 5.2]{B} which is full of details written in 
a more modern language.

\subsection{The Hecke operator $U_t$}\label{SecHecke}
Hecke operators on Drinfeld modular forms are formally defined using a double coset decomposition 
(see, e.g., \cite{A}). Here we just provide the definition of $U_t$ on harmonic cocycles
which is more suitable for computation. 

\noindent Our Atkin $U_t$-operator is
\[ U_t(f)(z):=\sum_{\beta\in \F_q} f\left(\frac{z+\beta}{t}\right) \]
(as in \cite{LM} and \cite[Section 4]{BVHP1}, we normalize the classical $U_t$ multiplying it by $t^{k-m}$ to eliminate 
any reference to the type $m$ in the final formulas). Via the residue map one translates this action on 
harmonic cocycles (for details see formula (17) in \cite[Section 5.2]{B}) 
\begin{align*}
U_t(\c(e))= t^{k-m}\sum_{\beta\in \F_q}  {\matrix{1}{\beta}{0}{t}}^{-1}\c\left( \matrix{1}{\beta}{0}{t}e\right) \,.
\end{align*}
We focus on the two congruence groups 
\[ \G(t):=\left\{ \gamma \in GL_2(A): \g\equiv Id \pmod{t} \right\}\quad \mathrm{and}\quad
\G_1(t):=\left\{ \g \in GL_2(A): \g\equiv \left( \begin{smallmatrix} 1&*\\0&1 \end{smallmatrix} \right) \pmod{t} \right\} \]
(for $\G_0(t)$ see \cite{BVHP1}).

\section{Action of $U_t$ on cusp forms for $\G_1(t)$} \label{SecGamma1}
The tree $\G_1(t)\backslash \Tr$ has two cusps (\cite[Proposition 5.6]{GN}) corresponding to $[1:0]$ and $[0:1]$.
The path connecting the two cusps is a fundamental domain for $\G_1(t)$. Here is a picture of it
\[ \xymatrix { \dots v_{-2,0}=\matrix {1}{0}{0}{t^2} \ar@/^0.5pc/@{<-}[r]^{\overline{e}_{-2,0}=\matrix {0}{1}{t}{0}}
\ar@/_0.5pc/[r]_{e_{-2,0}=\matrix {1}{0}{0}{t^2}} &
v_{-1,0}=\matrix {1}{0}{0}{t} \ar@/^0.5pc/@{<-}[r]^{\overline{e}_{-1,0}=\matrix {0}{1}{1}{0}}
\ar@/_0.5pc/[r]_{e_{-1,0}=\matrix {1}{0}{0}{t}} &
v_{0,0}=\matrix {1}{0}{0}{1} \ar@/^0.5pc/@{<-}[r]^{\overline{e}_{0,0}=\matrix {0}{t}{1}{0}}
\ar@/_0.5pc/[r]_{e_{0,0}=\matrix {1}{0}{0}{1}} &
v_{1,0}=\matrix {t}{0}{0}{1}\dots  } \] 
This fundamental domain does not contain stable vertices but one stable edge, namely $\e:=\overline{e}_{-1,0}$.

\subsection{The action of $U_t$} \label{UBasis}
Since $\G_1(t)$ has no prime to $p$ torsion and determinant 1, by
\cite[Propositions 5.4 and 5.18]{B}, the dimensions of $S^1_k(\G_1(t))$ and $S^2_k(\G_1(t))$
depend only on the genus of $\G_1(t)\backslash \Tr$ (which is 0 by \cite[Corollary 5.7]{GN}) 
and on the number of cusps. In particular
\begin{itemize}
\item[{\bf 1.}] {$\dim_{\C_\infty} S^1_k(\G_1(t))=k-1$;}
\item[{\bf 2.}] {$\dim_{\C_\infty} S^2_k(\G_1(t))= \left\{ \begin{array}{cr}
0 & k=2 \\
k-3 & k>2
\end{array}\right.$\ .}
\end{itemize}
We shall check diagonalizability of the operator $U_t$ starting with its action on a fixed basis: the same used
in \cite{LM} and \cite{BVHP1}.

For any $j\in\{0,1,\dots,k-2\}$, let $\c_j(\e)$ be defined by
\[ \c_j(\e)(X^iY^{k-2-i})=\left\{ \begin{array}{ll} 1 & {\rm if}\ i=j \\
0 & {\rm otherwise} \end{array} \right. \ .\]
The sets $\mathcal{B}^1_k(\G_1(t)):=\{\c_j(\e)\,,\,0\leqslant j\leqslant k-2\}$ and
$\mathcal{B}^2_k(\G_1(t)):=\{\c_j(\e)\,,\,1\leqslant j\leqslant k-3\}$ are bases for
$S^1_k(\G_1(t))$ and $S^2_k(\G_1(t))$ respectively. We shall work mainly on $\mathcal{B}^1_k(\G_1(t))$, the results for
$\mathcal{B}^2_k(\G_1(t))$ will easily follow and we shall point them out in some remarks along the way.

\noindent The action of $U_t$ on the basis $\mathcal{B}^1_k(\G_1(t))$ has been computed in \cite{BVHP1} 
and the final formula is 
\begin{align}\label{Ttcj}
U_t(\c_j(\e)) & = -(-t)^{j+1} \binom{k-2-j}{j} \c_j(\e) -t^{j+1}\sum_{h\neq 0}\left[ \binom{k-2-j-h(q-1)}{-h(q-1)} \right.\\
\ &\left. + (-1)^{j+1} \binom{k-2-j-h(q-1)}{j} \right] \c_{j+h(q-1)}(\e)  \nonumber
\end{align}
(where it is understood that $\c_{j+h(q-1)}(\e) \equiv 0$ whenever $j+h(q-1)<0$ or $j+h(q-1)>k-2$).

From formula \eqref{Ttcj} one immediately notes that the $\c_j$ can be divided into classes modulo $q-1$ and
every such class is stable under the action of $U_t$.
We shall denote by $C_j$ the class of $\c_j(\e)$, i.e, $C_j=\{\c_\ell(\e):\ell\equiv j\pmod{q-1}\}$. The cardinality
of $C_j$ is the largest integer $n$ such that $j+(n-1)(q-1)\leqslant k-2$. Reordering the basis as
$\mathcal{B}^1_k(\G_1(t))=\{C_0,C_1,\dots,C_{q-2}\}$, the matrix associated to the action of $U_t$ has (at most) $q-1$
blocks (of dimensions $|C_j|$, $0\leqslant j\leqslant q-2$) on the diagonal and 0 everywhere else. Obviously
the matrix is diagonalizable if and only if each block is. In particular $|C_j|\leqslant 1$ (i.e., $j+(q-1)>k-2$)
always yields a diagonal (or empty) block.

\subsection{Diagonalizability of $U_t$: $k\leqslant q+3$} \label{SecUkLessq+3}
From now on, to shorten proofs and notations we shall drop the $\e$ and we shall mainly consider classes
of cardinality $\geqslant 2$.

\begin{thm}\label{Thm1}
If $k\leqslant q+2$, then $U_t$ is diagonalizable.
\end{thm}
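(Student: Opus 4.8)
The plan is to exploit the block decomposition recalled above: $U_t$ is diagonalizable on $S^1_k(\G_1(t))$ if and only if every block $M_j$ ($0\leqslant j\leqslant q-2$) is, and a block with $|C_j|\leqslant 1$ is already diagonal. So the first step is to count, for $k\leqslant q+2$, the classes of cardinality $\geqslant 2$. Since $\c_\ell\in C_j$ forces $0\leqslant\ell\leqslant k-2$ with $\ell\equiv j\pmod{q-1}$, the class $C_j$ has cardinality $\geqslant 2$ exactly when $j+(q-1)\leqslant k-2$, i.e. $j\leqslant k-1-q$. Hence: for $k\leqslant q$ all blocks are $1\times 1$ and there is nothing to prove; for $k=q+1$ only $C_0=\{\c_0,\c_{q-1}\}$ can be nontrivial; and for $k=q+2$ only $C_0=\{\c_0,\c_{q-1}\}$ and $C_1=\{\c_1,\c_q\}$ can be, both of cardinality exactly $2$ when $q\geqslant 3$, whereas for $q=2$ there is a single class $C_0=\{\c_0,\c_1,\c_2\}$ of cardinality $3$.

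The second step is to write these blocks explicitly from \eqref{Ttcj}. The expansions are short: a binomial $\binom{a}{b}$ with $0\leqslant a<b$ vanishes, which kills almost all the terms, and the few that survive are governed by elementary mod-$p$ identities such as $\binom{q}{q-1}=q\equiv 0$, $\binom{q-1}{1}=q-1\equiv-1$, $\binom{q-1}{q-1}=1$, together with the parity of $q$. Carrying this out, for $k=q+1$ one gets $U_t(\c_0)=t\c_0+t\c_{q-1}$ and $U_t(\c_{q-1})=0$; for $k=q+2$ with $q\geqslant 3$ one gets, on $C_0$, again $U_t(\c_0)=t\c_0+t\c_{q-1}$ and $U_t(\c_{q-1})=0$, and on $C_1$, $U_t(\c_1)=t^2\c_1$ and $U_t(\c_q)=-t^{q+1}\c_1$; finally for $q=2$, $k=4$ one gets $U_t(\c_0)=t(\c_0+\c_1+\c_2)$, $U_t(\c_1)=t^2\c_1$, $U_t(\c_2)=t^3\c_1$. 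In each instance the matrix has enough zero entries that its characteristic polynomial is read off at once: $X(X-t)$ and $X(X-t^2)$ for the two $2\times 2$ blocks, and $X(X-t)(X-t^2)$ for the $3\times 3$ one (expanding along the first row).

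The third step is the conclusion: since $t$ is transcendental over $\F_q$, the roots $0,t,t^2$ are pairwise distinct in $\C_\infty$, so each of these characteristic polynomials is separable; a matrix with separable characteristic polynomial is diagonalizable, hence every $M_j$, and therefore $U_t$, is diagonalizable. The only real work lies in the bookkeeping of step two: one must check that for each admissible pair $(j,k)$ no unexpected binomial survives in \eqref{Ttcj}. The delicate points are the two ``return'' entries of a $2\times 2$ block (the coefficient of $\c_{j+(q-1)}$ in $U_t(\c_j)$ and of $\c_j$ in $U_t(\c_{j+(q-1)})$), exactly one of which vanishes so that the block stays triangular, and the even-characteristic value $q=2$, where the lone $3\times 3$ block must be handled directly rather than read off from the general pattern.
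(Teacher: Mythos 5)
Your proposal is correct and follows essentially the same route as the paper: the same block decomposition, the same enumeration of the nontrivial classes ($C_0$ for $k=q+1$; $C_0$, $C_1$ for $k=q+2$ with $q\geqslant 3$; the single $3\times 3$ class for $q=2$), and the same explicit matrices computed from \eqref{Ttcj}. The only cosmetic difference is that you conclude by observing the characteristic polynomials have distinct roots, whereas the paper exhibits the eigenvectors explicitly.
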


\begin{proof}
\noindent\underline{\em Case $k\leqslant q$} (the {\em trivial} case).\\
All classes $C_j$ have cardinality $\leqslant 1$. Equation \eqref{Ttcj} reduces to
\[U_t(\c_j) = -(-t)^{j+1} \binom{k-2-j}{j} \c_j \quad {\rm for\ any\ } 0\leqslant j \leqslant k-2\,, \]
so each $\c_j$ (for $0\leqslant j\leqslant \min\{q-2,k-2\}$) is an eigenvectors of eigenvalue
$-(-t)^{j+1} \binom{k-2-j}{j}$.

\noindent\underline{\em Case $k=q+1$.}\\
The unique class of cardinality $>1$ is $C_0=\{\c_0,\c_{q-1}\}$. Formula \eqref{Ttcj} yields
\[ U(\c_0)=t\left( \c_0 + \c_{q-1}\right)\quad \mathrm{and}\quad U(\c_{q-1})=-t^q\left( 1 + (-1)^q\right) \c_0=0\,.\]
The associated matrix has eigenvalues $t$ and $0$ with eigenvectors $\c_0+\c_{q-1}$ and $\c_{q-1}$.

\noindent\underline{\em Case $k=q+2$.}\\
We need to distinguish between $q=2$ and $q\geqslant 3$.

\noindent If $q\neq 2$ we have two more classes of cardinality 2, namely $C_0=\{\c_0,\c_{q-1} \}$ and $C_1=\{ \c_1,\c_q\}$.
For the class $C_0$ we have the same matrix of the previous case.
For the class $C_1$ one has 
\[ U\c_1=t^2 \c_1\quad \mathrm{and} \quad U\c_q=-t^{q+1}\c_1\,.\]
The associated matrix has eigenvalues $t^2$ and $0$ with eigenvectors $\c_1$ and $t^{q-1}\c_1+\c_q$.

\noindent If $q=2$ we just have one class (as it always happens with $q=2$), namely $C_0=\{\c_0,\c_1,\c_2 \}$.
The action on $C_0$ is given by
$U\c_0=t\left( \c_0 + \c_1 + \c_2 \right)$, $U\c_1=t^2 \c_1$ and $U\c_2=t^3\c_1$.
The associated matrix has eigenvalues $t^2$, $t$ and 0 with eigenvectors $\c_1$, $\c_0+(1+t)\c_1+\c_2$ and $t\c_1+\c_2$.
\end{proof}

\begin{rem}
For double cusp forms just observe that when $k\leqslant q+2$ all classes have cardinality (at most) 1.
\end{rem}

The first non diagonalizable matrix appears for $q$ even and $k=q+3$.

\begin{thm}\label{Thm1G1}
Let $k=q+3$, then $U_t$ is diagonalizable if and only if $q$ is odd.
\end{thm}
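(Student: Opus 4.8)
The plan is to apply formula \eqref{Ttcj} with $k=q+3$ and enumerate the diagonal blocks $M_j$ according to the cardinality of $C_j$. First I would observe that with $k-2=q+1$, a class $C_j$ has cardinality $2$ precisely when $j\leqslant 2$ (so $j$ and $j+(q-1)$ both lie in $\{0,\dots,q+1\}$), cardinality $1$ for $3\leqslant j\leqslant q-2$ (these give diagonal $1\times 1$ blocks and are irrelevant), and in the degenerate case $q=2$ there is a single class. So for $q\geqslant 3$ the only blocks that can fail to be diagonalizable are $M_0$ on $\{\c_0,\c_q\}$, $M_1$ on $\{\c_1,\c_{q}\}$ — wait, more precisely $M_1$ on $\{\c_1,\c_q\}$ is wrong since $1+(q-1)=q\leqslant q+1$, so $C_1=\{\c_1,\c_q\}$; and $C_2=\{\c_2,\c_{q+1}\}$. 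Thus I would write out $2\times 2$ matrices for $M_0,M_1,M_2$ using \eqref{Ttcj}, being careful with the binomial coefficients $\binom{k-2-j-h(q-1)}{-h(q-1)}$ and $\binom{k-2-j-h(q-1)}{j}$ for $h=\pm 1$, and noting that many of these vanish because the top of the binomial is negative or smaller than the bottom.

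The key computation is to identify which block has the antidiagonal pathology. I expect that $M_2$ (the block on $\{\c_2,\c_{q+1}\}$) is the critical one: the diagonal entry $-(-t)^{3}\binom{q-1}{2}$ of $U_t(\c_2)$ carries the factor $\binom{q-1}{2}=\frac{(q-1)(q-2)}{2}$, which vanishes in characteristic $2$ (since $q-2\equiv 0$, or rather since the $2$ in the denominator forces a careful reading — actually $\binom{q-1}{2}=\binom{q-1}{2}$ and in $\F_q$ with $q$ even one checks it is $0$), while $U_t(\c_{q+1})$ feeds back into $\c_2$ with a nonzero coefficient. So for $q$ even the matrix $M_2$ becomes strictly antidiagonal, hence has characteristic polynomial $\lambda^2 - (\text{const})$ with repeated root $0$ but is not the zero matrix, so it is not diagonalizable; this simultaneously produces an inseparable eigenvalue. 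For $q$ odd, $\binom{q-1}{2}\neq 0$ in $\F_q$, and one checks the block has two distinct eigenvalues (one of them nonzero, one zero, or two distinct nonzero), hence is diagonalizable. I would also verify $M_0$ and $M_1$ are always diagonalizable by the same distinct-eigenvalue argument, exactly as in the $k=q+2$ cases of Theorem \ref{Thm1}. For $q=2$ separately, $k=5$, one class $C_0=\{\c_0,\c_1,\c_2,\c_3\}$, and I would compute the $4\times 4$ matrix and exhibit explicitly that it is not diagonalizable (consistent with ``$q$ even $\Rightarrow$ not diagonalizable'').

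The main obstacle will be the bookkeeping of the binomial coefficients in \eqref{Ttcj}: with several of them straddling the boundary between ``genuinely nonzero'', ``zero because the upper index is negative'', and ``zero modulo $p$'', it is easy to misidentify which block is antidiagonal, and the characteristic-$2$ reductions of $\binom{q-1}{2}$ and the feedback coefficient $\binom{k-2-j-h(q-1)}{j}$ must be computed with care (e.g. via Lucas' theorem, since $q=p^r$). Once the correct block is pinned down, the diagonalizability conclusion is immediate from the shape of a $2\times 2$ matrix (diagonal $\Leftrightarrow$ already done; two distinct eigenvalues $\Leftrightarrow$ diagonalizable; antidiagonal with nilpotent square $\Leftrightarrow$ not), so the only real work is the arithmetic of the entries.
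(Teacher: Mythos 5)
Your overall strategy (split into the blocks $M_0,M_1,M_2$ via \eqref{Ttcj}, locate the antidiagonal block, treat $q=2$ separately) is the same as the paper's, but you have misidentified the critical block, and the error is not just bookkeeping: it would make the proof fail. The antidiagonal pathology sits in $M_1$, not $M_2$. Indeed for $j=1$ the diagonal coefficient in \eqref{Ttcj} is $-(-t)^2\binom{k-3}{1}=-t^2\binom{q}{1}=-t^2q$, which vanishes in characteristic $p$ for \emph{every} $q$ (odd or even), while the two off-diagonal feedback coefficients survive; so $M_1=\left(\begin{smallmatrix}0&-t^{q+1}\\-t^2&0\end{smallmatrix}\right)$ is antidiagonal in all characteristics, with characteristic polynomial $X^2-t^{q+3}=X^2-t^k$. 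The odd/even dichotomy in the theorem then comes from this single block: for $q$ odd, $k$ is even and $X^2-t^k$ has the two distinct roots $\pm t^{k/2}$, so $M_1$ is diagonalizable; for $q$ even, $k$ is odd, $\sqrt{t^k}$ is an inseparable double root and the nonzero antidiagonal matrix is not diagonalizable. Your proposed mechanism cannot produce this dichotomy, since you expect antidiagonality to appear only in characteristic $2$.

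By contrast, your claim that $\binom{q-1}{2}$ vanishes in characteristic $2$ is false: by Lucas' theorem every base-$p$ digit of $q-1$ equals $p-1$, so $\binom{q-1}{m}\equiv(-1)^m\pmod p$ for all $0\leqslant m\leqslant q-1$; in particular $\binom{q-1}{2}\equiv 1$ (concretely, $\tfrac{(q-1)(q-2)}{2}=(q-1)(2^{r-1}-1)$ is odd for $q=2^r$). Hence $M_2$ has nonzero diagonal entry $t^3$ and eigenvalues $t^3$ and $0$, and is always diagonalizable. Two smaller points: your enumeration of classes by cardinality breaks down for $q=3$, where $k-2=2(q-1)$ forces $C_0=\{\c_0,\c_2,\c_4\}$ of cardinality $3$ (with $C_1=\{\c_1,\c_3\}$ still antidiagonal and diagonalizable since $p$ is odd), so that case needs separate treatment alongside $q=2$; and the repeated root of a nonzero antidiagonal $2\times 2$ block is $\sqrt{ab}\neq 0$, not $0$.
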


\begin{proof}
To emphasize the classes modulo $q-1$, we write
\[ k-2=q+1 =\left\{ \begin{array}{ll} (q-1)+2 & {\rm if}\ q\geqslant 4 \\
2(q-1) & {\rm if}\ q=3 \\
3(q-1) & {\rm if}\ q=2 \end{array} \right. \ .\]

\noindent\underline{\em Case $q\geqslant 4$.}\\
The (nontrivial) classes are $C_0:=\{\c_0,\c_{q-1}\}$, $C_1:=\{\c_1,\c_q\}$ and $C_2:=\{\c_2,\c_{q+1}\}$.\\
For the class $C_0$ we obtain the same matrix of Theorem \ref{Thm1} case $k=q+1$.\\
For the class $C_1$ we obtain
\[ U_t\c_1 = -t^2\c_q\quad \mathrm{and} \quad U_t\c_q = -t^{q+1}\c_1\,,\]
with associated antidiagonal matrix
\[ M_1=\matrix{0}{-t^{q+1}}{-t^2}{0}\,.\]
This is diagonalizable if and only if $q$ is odd. Indeed the eigenvalues for odd $q$ are $\pm \sqrt{t^k}$ (resp. $\sqrt{t^k}$
with multiplicity 2 if $q$ is even) with eigenvectors $\pm t^{\frac{q-1}{2}} \c_1+\c_q$ (resp.
$t^{\frac{q-1}{2}} \c_1+\c_q$ if $q$ is even).\\
For the class $C_2$ we have
\[ U_t\c_2 =t^3\c_2\quad \mathrm{and} \quad U_t\c_{q+1} = -t^{q+2}\c_2\,.\]
The associated matrix has eigenvalues $t^3$ and $0$ with eigenvectors $\c_2$ and $t^{q-1}\c_2+\c_{q+1}$.

\noindent\underline{\em Case $q=3$.}\\
Since $k-2=2(q-1)$ we have only two classes: $C_0=\{\c_0,\c_2,\c_4\}$ and
$C_1=\{\c_1,\c_3\}$.\\
For the class $C_0$ we have
\[ U_t\c_0 = t\c_0 + t\c_2+ t\c_4,\quad U_t\c_2 = t^3\c_2\quad \mathrm{and} \quad U_t\c_4 = -t^5\c_2\,.\]
The associated matrix has eigenvalues $t^3$, $t$ and 0 with eigenvectors $\c_2$, $\c_0+(1+t^2)\c_2+\c_4$ 
and $t^2\c_2+\c_4$.\\
For $C_1$ we have 
\[ U_t\c_1 = -t^2\c_3\quad  \mathrm{and} \quad U_t\c_3 = -t^4\c_1\,.\]
The associated matrix has eigenvalues $\pm t^3$ with eigenvectors $\mp t\c_1+\c_3$.

\noindent\underline{\em Case $q=2$}.\\
Here $k=5$ and we have only one class, namely $C_0:=\{\c_0,\dots,\c_3\}$.
With calculations as above, it is easy to see that the associated matrix is
\[ M_0=\left( \begin{array}{cccc} t & 0 & 0 & 0 \\ t & 0 & t^3 & t^4 \\ t & t^2 & 0 & t^4 \\
t & 0 & 0 & 0  \end{array} \right) \]
with characteristic polynomial $X(X+t)(X^2+t^5)$.
There is the inseparable eigenvalue $\sqrt{t^5}$ with multiplicity 2 and the matrix is not
diagonalizable.
\end{proof}

\begin{rem}
When dealing with double cusp forms for $k=q+3$ we loose the elements $\c_0$ and $\c_{q+1}$ and we
have only the class $C_1$ (which coincides with $C_0$ for $q=2$) with two elements. In particular:
\begin{itemize}
\item[$q\geqslant 3$] the class $C_1$ is untouched with its antidiagonal matrix which is diagonalizable if and only
if $q$ is odd;
\item[$q=2$] we loose the external frame of the matrix and are left
with the antidiagonal matrix $\left(\begin{smallmatrix} 0&t^3\\t^2&0\end{smallmatrix}\right)$. \end{itemize}
\end{rem}

The result for $k=q+3$ recalls the one obtained in \cite{LM} for the operators ${\bf T}_{\mathfrak{p}}$ (for $\mathfrak{p}\neq (t)$).
As $k$ grows it becomes quite difficult to check diagonalizability of the matrices $M_j$. The particular case of the
classes $C_j$ containing $\Gamma_0(t)$-invariant cusp forms has been treated in \cite{BVHP1}. For the other classes we can
provide a general result only for small dimensions.

\section{Diagonalizability of $U_t$: blocks of dimension 2} \label{SecDim2}
We shall consider only blocks arising from classes $C_j$ of order 2. Since some cases have already been
checked we assume that $k > q+3$, hence, in particular, we do not consider here the case $q=2$ because the unique class
$C_0$ would have cardinality $k-1>4$. Moreover for $q$ even we have that the class arising from $\Gamma_0(t)$ is
$C_{\frac{k-1-q}{2}}$ ($k$ odd, see \cite[Section 4.3]{BVHP1}) and we know that the associated matrix is antidiagonal
hence non diagonalizable (see \cite[Section 5.1]{BVHP1})

To check diagonalizability we shall use mainly the following

\begin{lem}\label{Dim2IsDiag}
Let $C_j=\{\c_j,\c_{j+(q-1)}\}$ (with $0\leqslant j\leqslant q-2$) be a class with associated matrix $M_j$.
If $M_j\neq 0$, then $M_j$ is diagonalizable if and only if \begin{itemize}
\item[{\bf 1.}] $\binom{k-2-j}{j}\neq 0$ or
\item[{\bf 2.}] $q$ odd and $\det(M_j)\neq 0$.
\end{itemize}
\end{lem}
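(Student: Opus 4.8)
The plan is to write down the $2\times 2$ matrix $M_j$ explicitly from formula \eqref{Ttcj} and then analyze when a $2\times 2$ matrix fails to be diagonalizable. Since $|C_j|=2$ means $j+(q-1)\leqslant k-2<j+2(q-1)$, formula \eqref{Ttcj} gives $U_t\c_j = -(-t)^{j+1}\binom{k-2-j}{j}\c_j + (\text{coeff})\,\c_{j+(q-1)}$ and $U_t\c_{j+(q-1)} = (\text{coeff})\,\c_j$ (the diagonal coefficient on $\c_{j+(q-1)}$ vanishes because $\binom{k-2-(j+q-1)}{j+q-1}=0$ when $j+q-1>k-2-(j+q-1)$, i.e. when the second element cannot map back to itself with a nonzero binomial — one should double-check this, but it is what makes $M_j$ of the stated shape). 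Writing $a$ for the $(1,1)$-entry $-(-t)^{j+1}\binom{k-2-j}{j}$, $b$ for the $(1,2)$-entry, $c$ for the $(2,1)$-entry, and $0$ for the $(2,2)$-entry, the matrix is $M_j=\left(\begin{smallmatrix} a & b \\ c & 0\end{smallmatrix}\right)$ with $\det M_j = -bc$.

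The key elementary fact I would invoke is: a nonzero $2\times 2$ matrix $M$ over a field is diagonalizable if and only if either it has two distinct eigenvalues, or it is a scalar matrix. Equivalently, $M$ is non-diagonalizable iff its characteristic polynomial has a repeated root $\lambda$ but $M\neq \lambda I$. For $M_j$ the characteristic polynomial is $X^2 - aX - bc$, with discriminant $\Delta = a^2 + 4bc$. In odd characteristic, $M_j$ is non-diagonalizable precisely when $\Delta = 0$ and $M_j$ is not scalar; since $M_j$ has a zero on the diagonal, it is scalar only if it is the zero matrix, which is excluded. Thus for $q$ odd, $M_j$ fails to be diagonalizable iff $a^2 = -4bc$, i.e. iff $a^2 = -4bc$ with $a^2 = t^{2(j+1)}\binom{k-2-j}{j}^2$. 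I would then argue that this forces $\binom{k-2-j}{j}=0$ forcing in turn $bc = 0$, hence $\det M_j = 0$ — so in odd characteristic $M_j$ is diagonalizable exactly when $\binom{k-2-j}{j}\neq 0$ or $\det M_j\neq 0$, which is conditions \textbf{1} or \textbf{2}. (One must be a little careful: if $\binom{k-2-j}{j}=0$ then $a=0$, $\Delta = 4bc$, so $M_j$ is diagonalizable iff $bc\neq 0$ iff $\det M_j\neq 0$; this is exactly the content of the lemma restricted to the $a=0$ subcase.)

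In even characteristic the picture collapses because $4bc = 0$, so $\Delta = a^2$, which is always a square, and the eigenvalues coincide iff $a^2$... wait, over characteristic $2$ the two roots of $X^2 - aX - bc$ coincide iff $a = 0$. So if $\binom{k-2-j}{j}\neq 0$ (condition \textbf{1}), then $a\neq 0$, the eigenvalues $0$ and $a$ are distinct, and $M_j$ is diagonalizable. If instead $a = 0$, then $M_j = \left(\begin{smallmatrix} 0 & b \\ c & 0\end{smallmatrix}\right)$ is antidiagonal with characteristic polynomial $X^2 - bc = X^2 + bc$, which has the repeated root $\sqrt{bc}$ (an inseparable situation unless $bc=0$), and since $M_j\neq 0$ this is never scalar, so $M_j$ is non-diagonalizable whenever $a=0$ in characteristic $2$. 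Hence in even characteristic only condition \textbf{1} can save us, which matches the statement that condition \textbf{2} ($q$ odd) is the only other route.

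I expect the main obstacle to be purely bookkeeping rather than conceptual: confirming from \eqref{Ttcj} that the $(2,2)$-entry of $M_j$ is genuinely zero (i.e. that $\binom{k-2-(j+q-1)}{j+q-1}=0$ and that no other term contributes to the coefficient of $\c_{j+q-1}$ in $U_t\c_{j+q-1}$ under the constraint $|C_j|=2$), and then the short verification that $a^2 = -4bc$ in odd characteristic can only happen when $a=0$. For the latter I would compute $bc$ explicitly in terms of the binomial coefficients appearing in \eqref{Ttcj} and check that $a^2 + 4bc$, as a polynomial in $t$, has its leading/lowest terms not cancelling unless $\binom{k-2-j}{j}=0$ — the powers of $t$ attached to $a^2$, to $b$, and to $c$ are $2(j+1)$, $j+1$, and $(j+q-1)+1 = j+q$ respectively, so $4bc$ carries $t^{2j+q+1}$ while $a^2$ carries $t^{2j+2}$, and for $q>3$ these exponents differ, forcing $a=0$ outright; the residual small cases ($q=3$, where the exponents can coincide) would be handled by the explicit computations already done in Theorem~\ref{Thm1G1}. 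This reduces the whole lemma to a finite check plus the elementary $2\times 2$ diagonalizability criterion.
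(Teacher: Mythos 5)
Your proposal follows essentially the same route as the paper's proof: both extract $M_j=\left(\begin{smallmatrix} a & b \\ c & 0\end{smallmatrix}\right)$ from \eqref{Ttcj} (the $(2,2)$-entry vanishes because $k-2<j+2(q-1)$ kills $\binom{k-2-j-(q-1)}{j+(q-1)}$), then apply the elementary $2\times 2$ criterion to $X^2-aX-bc$, using the mismatch of $t$-powers ($t^{2j+2}$ in $a^2$ versus $t^{2j+q+1}$ in $bc$) to conclude that the discriminant vanishes exactly when conditions \textbf{1} and \textbf{2} both fail. Two harmless slips only: in characteristic $2$ with $a\neq 0$ the two eigenvalues are the distinct roots of the separable polynomial $X^2+aX+bc$, not literally $0$ and $a$; and the exponents $2j+2$ and $2j+q+1$ coincide only if $q=1$, so there is no residual $q=3$ case and no appeal to Theorem \ref{Thm1G1} is needed.
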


\begin{proof}
Note that $|C_j|=2$ implies $j+2(q-1)>k-2$. Formula \eqref{Ttcj} yields
\[ U_t\c_j=-(-t)^{j+1} \left[ \binom{k-2-j}{j}\c_j + \binom{k-2-j-(q-1)}{j}\c_{j+(q-1)}\right] \]
and
\begin{align*} U_t\c_{j+(q-1)}= & -t^{j+(q-1)+1}\left[ \binom{k-2-j}{q-1} +(-1)^{j+(q-1)+1} \binom{k-2-j}{j+(q-1)}\right] \c_j \\
\ & -(-t)^{j+(q-1)+1} \binom{k-2-j-(q-1)}{j+(q-1)}\c_{j+(q-1)}\, .
\end{align*}
Now $(-1)^{j+(q-1)+1}=(-1)^{j+1}$ and $k-2<j+2(q-1)$ implies
$\binom{k-2-j-(q-1)}{j+(q-1)}=0$. Therefore the associated matrix is
$M_j=\matrix {(-1)^{j+2}\alpha t^{j+1}}{-\gamma t^{j+q}}{(-1)^{j+2}\beta t^{j+1}}{0}$ with
\[ \alpha:= \binom{k-2-j}{j}, \ \beta:=\binom{k-1-j-q}{j},\   
\gamma:= \binom{k-2-j}{q-1} +(-1)^{j+1} \binom{k-2-j}{j+(q-1)} \]
and characteristic polynomial 
\[ \det(M_j-XI)=X^2-(-1)^{j+2}\alpha t^{j+1} X+(-1)^{j+2}\beta\gamma t^{2j+q+1}\,. \]
If $\alpha\neq 0$, it has distinct roots and $M_j$ is diagonalizable (in any characteristic).\\ 
If $\alpha=0$ and $\det(M_j)\neq 0$, i.e., $\beta\gamma\neq 0$, then the matrix $M_j$
still has distinct eigenvalues in odd characteristic. When $q$ is even we get only one eigenvalue with
multiplicity 2 and the matrix is non diagonalizable 
(note also that the eigenvalue $\sqrt{t^{2j+q+1}}$ is inseparable).
\end{proof}

In this small dimension case, the powers of $t$ have nothing to do with the diagonalizability
of the matrices : hence, from now on, we simply check whether the matrix 
$\left(\begin{smallmatrix}(-1)^{j+2}\alpha & -\gamma \\ (-1)^{j+2}\beta & 0\end{smallmatrix}\right)$ (still denoted $M_j$ by a little abuse of notation)
fits the description of Lemma \ref{Dim2IsDiag}.

To compute the coefficients we shall mainly use the following well known

\begin{lem}\label{KummerThm}(Lucas's Theorem)
Let $n,m\in\N$ with $m\leqslant n$ and write their $p$-adic expansions as
$n=n_0+n_1p+\dots +n_d p^d$, $m=m_0+m_1 p + \dots +m_d p^d$. Then
\[ \binom{n}{m} \equiv \binom{n_0}{m_0} \binom{n_1}{m_1} \dots \binom{n_d}{m_d} \pmod p\,.\]
\end{lem}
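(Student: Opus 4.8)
The plan is to prove Lucas's Theorem by comparing coefficients of the polynomial $(1+x)^n$ in the ring $\F_p[x]$. The one ingredient I need is the Frobenius (``freshman's dream'') congruence $(1+x)^p \equiv 1+x^p \pmod p$, which is immediate from the fact that $p \mid \binom{p}{i}$ for $0<i<p$. Iterating this $i$ times yields $(1+x)^{p^i} \equiv 1 + x^{p^i} \pmod p$ for every $i \geqslant 0$.

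Next I would use the $p$-adic expansion $n = n_0 + n_1 p + \dots + n_d p^d$ to split the power accordingly:
\[ (1+x)^n = \prod_{i=0}^d \left((1+x)^{p^i}\right)^{n_i} \equiv \prod_{i=0}^d \left(1+x^{p^i}\right)^{n_i} \pmod p . \]
The coefficient of $x^m$ on the left is $\binom{n}{m}$. On the right, expanding each factor by the binomial theorem, a term $x^m$ arises precisely from tuples of integers $j_i$ with $0 \leqslant j_i \leqslant n_i$ and $\sum_i j_i p^i = m$, each such tuple contributing $\prod_i \binom{n_i}{j_i}$.

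The key point — and the only place a moment of care is needed — is that since each $n_i \leqslant p-1$, the bound $0 \leqslant j_i \leqslant n_i$ forces $0 \leqslant j_i \leqslant p-1$, so $(j_0,\dots,j_d)$ is a string of base-$p$ digits of $m$; by uniqueness of the base-$p$ representation the only possibility is $j_i = m_i$ for all $i$ (with $m_i = 0$ automatically once $i$ exceeds the length of the expansion of $m$, which is consistent with $\binom{n_i}{0}=1$). Hence the coefficient of $x^m$ on the right is $\prod_{i=0}^d \binom{n_i}{m_i}$, and equating the two coefficients modulo $p$ gives the statement. If one prefers to avoid generating functions, the same conclusion follows by induction on $d$ from the one-step congruence $\binom{n}{m} \equiv \binom{n_0}{m_0}\binom{\lfloor n/p\rfloor}{\lfloor m/p\rfloor} \pmod p$, which is just the $d=1$ instance of the argument above. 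There is no genuine obstacle; this classical lemma is recorded only for the reader's convenience and to fix the notation used in the coefficient computations that follow.
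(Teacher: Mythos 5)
Your argument is correct and complete: the Frobenius congruence $(1+x)^{p^i}\equiv 1+x^{p^i}\pmod p$, the factorization $(1+x)^n=\prod_i\bigl((1+x)^{p^i}\bigr)^{n_i}$, and the observation that the digit bounds $0\leqslant j_i\leqslant n_i\leqslant p-1$ together with uniqueness of base-$p$ representation force $j_i=m_i$ are exactly the pieces needed, and you have handled the only delicate point (why no other tuple $(j_0,\dots,j_d)$ can contribute to the coefficient of $x^m$). The paper itself does not prove this lemma at all --- it simply refers the reader to the literature (the David--Webb paper on Lucas' theorem for prime powers) --- so there is no argument in the text to compare yours against; your generating-function proof is the standard self-contained one and could serve as a drop-in replacement for the citation.
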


\begin{proof}
See, e.g., \cite{DW}.
\end{proof}

\subsection{Odd characteristic}
In this section we assume $p\geqslant 3$ and prove the following

\begin{thm}\label{ThmDiag2OddChar}
Assume $p$ is odd, $k>q+3$ and $|C_j|=2$. Then $M_j$ is diagonalizable.
\end{thm}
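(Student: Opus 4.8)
The plan is to reduce, via Lemma \ref{Dim2IsDiag}, to showing that for every class $C_j$ of cardinality $2$ (so $j+(q-1)\leqslant k-2 < j+2(q-1)$) and $p$ odd, either $\alpha:=\binom{k-2-j}{j}\neq 0$ in $\F_q$, or else $M_j$ is the zero matrix, or else $\det M_j = \pm\beta\gamma \neq 0$. Since $p\geqslant 3$, the second alternative of the lemma is always available, so it suffices to prove: \emph{if $\alpha = 0$ then $M_j=0$ or $\beta\gamma\neq 0$}. I would therefore assume $p\nmid\alpha$ fails, i.e. $\binom{k-2-j}{j}\equiv 0\pmod p$, and analyze what Lucas's Theorem (Lemma \ref{KummerThm}) forces on the $p$-adic digits of $k-2-j$ and of $j$.

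\medskip

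Write $N:=k-2-j$, so the range condition becomes $q-1\leqslant N < 2(q-1)$, i.e. $N = (q-1)+s$ with $0\leqslant s\leqslant q-2$; note $j$ and $N$ lie in $[0,q-2]\cup$ a controlled window, and both have at most $r$ base-$p$ digits up to one extra leading digit. The key structural step is to read off, digit by digit, the three binomials
\[
\alpha=\binom{N}{j},\qquad \gamma=\binom{N}{q-1}+(-1)^{j+1}\binom{N}{j+(q-1)},\qquad \beta=\binom{N-(q-1)}{j}=\binom{s}{j}.
\]
By Lucas, $\alpha\equiv 0$ exactly when some digit of $j$ exceeds the corresponding digit of $N$. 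I would then split into cases according to where that digit overflow occurs, and in each case show that it propagates to make the ``dangerous'' direction of $M_j$ vanish in a way that either kills the whole matrix (reducing to the diagonal case) or leaves $\det M_j$ a product of two Lucas products, each with no digit overflow, hence nonzero. The arithmetic identities $\binom{N}{q-1}$ with $N=(q-1)+s$ and $0\le s\le q-2$ collapsing to $\binom{s}{0}\binom{1}{1}\cdots=1$-type evaluations in odd characteristic, together with $q-1 = (p-1)+(p-1)p+\dots+(p-1)p^{r-1}$ having all digits equal to $p-1$, are exactly what makes $p$ odd essential (the subtraction of the two terms in $\gamma$ does not cancel). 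I expect this case bookkeeping — handling the carry/borrow between the digit blocks of $j$, $s$, and the all-$(p-1)$ digits of $q-1$ — to be the main obstacle; the payoff is that $\alpha=0$ only happens in configurations where $\beta$ and $\gamma$ are forced \emph{nonzero} (or $M_j=0$), so alternative \textbf{2} of Lemma \ref{Dim2IsDiag} always applies.

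\medskip

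Concretely I would organize the proof as follows. First, record the normalized shape $M_j=\left(\begin{smallmatrix}(-1)^{j+2}\alpha & -\gamma\\ (-1)^{j+2}\beta & 0\end{smallmatrix}\right)$ from Lemma \ref{Dim2IsDiag} and restate the goal as ``$\alpha\ne 0$ or $\beta\gamma\ne 0$ or $M_j=0$''. Second, substitute $N=(q-1)+s$, $0\le s\le q-2$, and use Lucas to compute $\beta=\binom{s}{j}$ and to rewrite the two terms of $\gamma$; here the digit expansions of $q-1$ (all digits $p-1$) make $\binom{N}{q-1}$ and $\binom{N}{j+(q-1)}$ transparent. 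Third, assume $\alpha=0$, pick the least index $i$ with (digit of $j$ at $i$) $>$ (digit of $N$ at $i$), and run the case analysis: either this overflow already forces $j>s$ in the relevant digits so that $\beta=0$ \emph{and} simultaneously forces the two $\gamma$-terms to vanish, giving $M_j=0$; or the overflow is ``absorbed'' by the $p-1$ digits of $q-1$ inside $\beta$ and $\gamma$ so that $\binom{s}{j}\ne 0$ and $\gamma\ne 0$ (using $p$ odd to rule out $\binom{N}{q-1}\equiv(-1)^{j}\binom{N}{j+(q-1)}$, i.e. no cancellation), whence $\det M_j=\pm\beta\gamma\ne 0$. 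Fourth, invoke alternative \textbf{2} of Lemma \ref{Dim2IsDiag} to conclude $M_j$ is diagonalizable, and separately dispose of $M_j=0$ as trivially diagonalizable. I would keep the digit computations terse, presenting only the two or three genuinely distinct overflow patterns rather than every sub-case.
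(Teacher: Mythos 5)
Your reduction is the same as the paper's: everything rests on Lemma \ref{Dim2IsDiag}, and the whole content of the theorem is the claim that when $\alpha=\binom{k-2-j}{j}\equiv 0$ one cannot have exactly one of $\beta,\gamma$ nonzero (so that $M_j$ is either the zero matrix or has nonzero determinant, and alternative \textbf{2} of the lemma applies since $p$ is odd). You identify that claim correctly, and your normalization $N=k-2-j=(q-1)+s$ is equivalent to the paper's $k-2-j=q+y$. The gap is that the claim is only asserted: the sentences beginning ``I would then split into cases according to where that digit overflow occurs'' and ``either this overflow already forces \dots or the overflow is absorbed \dots'' describe the desired \emph{outcome} of the case analysis rather than carrying it out, and that digit bookkeeping is the entire proof. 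For comparison, the paper's verification (for $2\leqslant j<\frac{k-1-q}{2}$, after peeling off $C_0$, $C_1$, $C_{k-1-q}$ and the $\Gamma_0(t)$ class $C_{\frac{k-1-q}{2}}$) runs as follows: $\alpha\equiv\binom{y}{j}$, $\beta=\binom{y+1}{j}$, $\gamma\equiv(-1)^{j+1}\binom{y}{j-1}$; if $\ell$ is the least index with $j_\ell\neq 0$, then $\gamma\neq 0$ forces $y_i=p-1$ for $i<\ell$, $y_\ell\geqslant j_\ell-1$ and $y_i\geqslant j_i$ for $i>\ell$, which combined with $\alpha=0$ pins down $y_\ell=j_\ell-1$; then $y+1$ has digits $(0,\dots,0,j_\ell,y_{\ell+1},\dots,y_{r-1})$ and $\beta\neq 0$, and conversely. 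You would need to supply this (or an equivalent) argument for the proof to be complete.

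Two of your specific predictions about the digits are also off and would derail the write-up if followed literally. First, $\binom{N}{q-1}$ does not ``collapse to a $1$-type evaluation'': for $N=q+y$ with $0\leqslant y\leqslant q-2$, Lucas gives $\binom{N}{q-1}\equiv\prod_i\binom{y_i}{p-1}=0$, so this term simply disappears from $\gamma$; it equals $1$ only in the boundary case $N=q-1$, i.e. $j=k-1-q$, where $\alpha\equiv(-1)^j\neq 0$ anyway. Second, oddness of $p$ is not what prevents cancellation between the two terms of $\gamma$ (the first term vanishes identically in the relevant range, for any $p$); it enters only at the very end, through alternative \textbf{2} of Lemma \ref{Dim2IsDiag}, to guarantee that the antidiagonal blocks with $\beta\gamma\neq 0$ have two distinct eigenvalues rather than a single inseparable one --- which is exactly where the statement fails for $q$ even.
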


\begin{proof}
The hypothesis on the cardinality of $C_j$ yields $k<j+2q$. In particular,
whenever we write $k-2-j=xq+y$ with $0\leqslant y\leqslant q-1$ we have $x=0$ or 1.\\
We split the proof in various cases depending on $j$ and we immediately get rid of the
special case $j=\frac{k-1-q}{2}$ (assuming $k-1-q$ is even) which has already been treated in \cite{BVHP1}.

{\bf The class $C_{\frac{k-1-q}{2}}$.}\\
Since $k=2j+2+(q-1)$ this is a class (the other is the one containing $\c_{\frac{k-2}{2}}$)
associated to the $\Gamma_0(t)$-invariant cusp forms. Moreover $k>q+3$ yields $j> 1$ so the dimension of
the matrix $M_{\frac{k-1-q}{2}}$ is $\leqslant j$. Therefore we can apply \cite[Theorem 5.2]{BVHP1}: the matrix
$M_{\frac{k-1-q}{2}}$ is antidiagonal and diagonalizable (with eigenvalues $\sqrt{t^k}$).

{\bf The class $C_0$.} \\ 
It is easy to see that $\alpha=\beta=1$ and $\gamma=0$, hence $M_0$ is diagonalizable
(with eigenvalues 0 and $t$).

{\bf The class $C_1$.} \\ 
If $\alpha = k-3 \not\equiv 0 \pmod{p}$ we are done ($M_1$ is diagonalizable by Lemma \ref{Dim2IsDiag}).\\
If $k-3\equiv 0\pmod{p}$, then $\beta=k-3-q+1\equiv 1\pmod{p}$ and we need to check $\gamma$. Now
\[ \gamma=\binom{k-3}{q-1}+\binom{k-3}{q}=\binom{k-2}{q}\]
and this is 1 (by Lucas' Theorem) because $q+1<k-2<2q-1$.

{\bf The class $C_j$ with $2\leqslant j < \frac{k-1-q}{2}$.}\\ 
Write $k-2-j=xq+y$ with $0\leqslant y\leqslant q-1$. 
Note that $x=0$ would yield
$k-2-j<q$, i.e., $j>k-q-2$ and $\frac{k-1-q}{2}>k-q-2$, which leads to $k<q+3$ a contradiction.
Therefore $k-2-j=q+y$ and $\alpha\equiv\binom{y}{j}\pmod p$ by Lucas' Theorem. \\
Now $y=q-1$ yields $k=2q+j+1$ a contradiction to $|C_j|=2$, hence $y\leqslant q-2$.
We have $\beta=\binom{y+1}{j}$ and
\begin{align*}
\gamma & =\binom{q+y}{q-1}+(-1)^{j+1}\binom{q+y}{q+j-1}\equiv \binom{y}{q-1}+(-1)^{j+1}\binom{y}{j-1} \pmod{p} \\
\ & \equiv (-1)^{j+1}\binom{y}{j-1} \pmod{p} \,.
\end{align*}
If $\alpha \neq 0$, then $M_j$ is diagonalizable by Lemma \ref{Dim2IsDiag}.\\
If $\alpha =0$, then consider the $p$-adic expansions (with $q=p^r$):
\begin{align*}
y & = y_0+y_1p+\cdots +y_{r-1}p^{r-1};\\
j & = j_\ell p^\ell +j_{\ell+1}p^{\ell+1}+\cdots +j_{r-1}p^{r-1}\quad{\rm (with}\ \ell\geqslant 0\ {\rm and}\ j_\ell\neq 0{\rm )}
\end{align*}
so that $j-1 = (p-1) + \cdots +(p-1)p^{\ell -1}+( j_\ell -1)p^\ell +j_{\ell+1}p^{\ell+1}+\cdots +j_{r-1}p^{r-1}$.
Since $\alpha\equiv \displaystyle{\prod_{i=0}^{r-1} \binom{y_i}{j_i} \pmod{p}}$
and we are assuming $\alpha=0$, there must be at least one index $i$ such that $y_i<j_i$. 
We want to show that (when $\alpha=0$) $\gamma\neq 0$ if and only if $\beta\neq 0$. Indeed 
\[ \gamma\equiv (-1)^{j+1}\prod_{i=0}^{\ell-1} \binom{y_i}{p-1} \cdot \binom{y_\ell}{j_\ell-1}\cdot
\prod_{i=\ell+1}^{r-1} \binom{y_i}{j_i} \not\equiv 0\pmod p \] 
if and only if  $y_i=p-1$ for $0\leqslant i\leqslant \ell-1$, $y_\ell \geqslant j_\ell-1$ and  $y_i\geqslant j_i$ for 
$\ell+1\leqslant i\leqslant r-1$. But this is compatible with $\alpha=0$ if and only if $y_\ell=j_\ell-1$.
An immediate consequence is that 
\[ y=(p-1) + \cdots +(p-1)p^{\ell -1}+( j_\ell -1)p^\ell +y_{\ell+1}p^{\ell+1}+\cdots +y_{r-1}p^{r-1} \]
and 
\[ \beta \equiv \binom{y+1}{j}\pmod{p} \equiv \prod_{i=0}^{\ell-1} \binom{0}{p-1} \cdot \binom{j_\ell}{j_\ell}\cdot
\prod_{i=\ell+1}^{r-1} \binom{y_i}{j_i} \not\equiv 0\pmod p\,.\]
The reverse arrow $\beta \neq 0\Longrightarrow \gamma\neq 0$ is similar. Then, if $\alpha=0$, the block $M_j$ is the 
null matrix or an antidiagonal one and diagonalizable in any case.

{\bf The class $C_j$ with $\frac{k-1-q}{2}< j\leqslant k-q-1$.}\\
Here $j+(q-1)>k-2-j$ yields $\beta=0$ and $\gamma=\binom{k-2-j}{q-1}$. Then $\gamma\neq 0$ if and only if 
$k-2-j=xq+q-1$. Now $x=1$ leads to $k=2q+j+1$, a contradiction to $|C_j|=2$, so $x=0$ and $j=k-1-q$
(the largest index $j$ for which $|C_j|=2$). In this case $\alpha=\binom{q-1}{j}\equiv (-1)^j\pmod{p}$ and
$M_j=\matrix{1}{-1}{0}{0}$
is diagonalizable with eigenvalues 0 and $t^{k-q}$. When $\gamma=0$ the matrix is always diagonalizable
(no matter the value of $\alpha$) and one can actually prove that $M_j=0$. Indeed
$\alpha\equiv\binom{y}{j}\pmod{p}$ and this is 0 for $y<j$. But $j<k-q-1$
yields $k-2-j>q-1$, so $x=1$ and $y=k-2-j-q\geqslant j$ would lead to $j\leqslant \frac{k-2-q}{2}$,
which is outside our current range.  
\end{proof}

This settles all blocks of dimension 2. Note that it means that in odd characteristic $U_t$ is diagonalizable
for any $k\leqslant 2q-1$. For $k=2q$ we only have to check the unique class of cardinality 3, i.e., $C_0$
and we do it here for completeness.

\begin{thm}\label{k=2qOddChar}
Assume $p$ is odd, then $U_t$ is diagonalizable for $k=2q$.
\end{thm}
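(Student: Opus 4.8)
For $k=2q$ we have $k-2=2q-2=2(q-1)$, so modulo $q-1$ the only class of cardinality $>1$ is $C_0=\{\c_0,\c_{q-1},\c_{2q-2}\}$, and all other classes $C_j$ with $1\leqslant j\leqslant q-2$ are singletons, hence give diagonal blocks with the eigenvalue $-(-t)^{j+1}\binom{k-2-j}{j}$ read off from \eqref{Ttcj}. Thus the whole problem reduces to the $3\times 3$ block $M_0$. The plan is to compute the three columns of $M_0$ directly from formula \eqref{Ttcj} with $j=0$ (then $h=1,2$ contribute), $j=q-1$ (then $h=-1,1$), and $j=2q-2$ (then $h=-2,-1$), simplifying the binomial coefficients via Lucas's Theorem (Lemma \ref{KummerThm}); most of these coefficients are of the shape $\binom{2q-2}{q-1}$, $\binom{2q-2}{2q-2}$, $\binom{q-1}{0}$, etc., whose $p$-adic digits are all $\leqslant p-1$ and so evaluate to $1$ or $0$ cleanly, together with the characteristic-$p$ vanishing of $1+(-1)^{\text{odd}}$ type expressions which do \emph{not} vanish now since $p$ is odd.

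Once $M_0$ is written out explicitly (with the powers of $t$ being harmless as in the dimension-$2$ discussion), the remaining task is to exhibit three linearly independent eigenvectors, or equivalently to show the characteristic polynomial has three distinct roots or that $M_0$ is semisimple at any repeated root. I expect the pattern already visible in the $q=3$ subcase of Theorem \ref{Thm1G1} to persist: the first row/column structure forces one eigenvalue $\sim t$ and, after stripping that, one is left with an upper-left entry $\sim t^q$ coming from $-(-t)^q\binom{q-1}{0}$ and a genuinely nonzero off-diagonal entry, giving a second nonzero eigenvalue and a kernel vector. Concretely I would first produce the eigenvector supported on $\c_{q-1}$ (expecting $U_t\c_{q-1}$ to be a multiple of $\c_{q-1}$ after the $C_0$ reduction, since in characteristic $p\neq 2$ the coefficient $\binom{2q-2}{q-1}$ is nonzero while the ``$\c_0$'' and ``$\c_{2q-2}$'' contributions to $U_t\c_{q-1}$ either vanish or recombine), then the eigenvector $\sim t^{q-1}\c_{q-1}+\c_{2q-2}$ in the kernel, and finally solve for the $t$-eigenvector of the form $\c_0+(\ast)\c_{q-1}+\c_{2q-2}$.

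The main obstacle is purely computational bookkeeping: getting the six binomial coefficients appearing in the three columns right, keeping track of the signs $(-1)^{j+1}$ (which is where oddness of $p$ enters decisively, preventing the antidiagonal degeneration that kills diagonalizability in the $q=2$ case of Theorem \ref{Thm1G1}), and confirming that the repeated eigenvalue $0$ (if it occurs) has a two-dimensional eigenspace rather than a Jordan block. I would guard against the latter by checking $\operatorname{rank}(M_0)\leqslant 2$ only when two eigenvalues coincide, which the explicit entries make immediate. No new ideas beyond Lemma \ref{Dim2IsDiag}'s philosophy and Lucas's Theorem are needed; the proof is a finite verification.
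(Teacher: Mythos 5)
Your treatment of the block $M_0$ is essentially the paper's own proof. Formula \eqref{Ttcj} gives $U_t\c_0=t(\c_0+\c_{q-1}+\c_{2q-2})$, $U_t\c_{q-1}=t^q\c_{q-1}$ and $U_t\c_{2q-2}=-t^{2q-1}\c_{q-1}$ (the sign $-(-t)^q=t^q$ and the cancellation $\binom{2q-2}{q-1}+(-1)^{q}\binom{2q-2}{q-1}=0$ in the $h=-1$ term are exactly where oddness of $q$ enters), and the three eigenvectors you predict --- $\c_{q-1}$ for $t^q$, $t^{q-1}\c_{q-1}+\c_{2q-2}$ for $0$, and $\c_0+(1+t^{q-1})\c_{q-1}+\c_{2q-2}$ for $t$ --- are precisely the ones the paper exhibits. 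Since the three eigenvalues $t$, $t^q$, $0$ are distinct, that part of the plan goes through as the finite verification you describe.

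The genuine gap is in your reduction to $M_0$. For $k=2q$ one has $k-2=2(q-1)$, so for every $1\leqslant j\leqslant q-2$ the index $j+(q-1)\leqslant 2q-3\leqslant k-2$ is still admissible: the classes $C_1,\dots,C_{q-2}$ have cardinality $2$, not $1$. Your claim that they are singletons is false, so as written your argument says nothing about these $2\times 2$ blocks, and they are exactly the blocks where non-diagonalizability could in principle occur. The repair is immediate and is what the paper does implicitly: these blocks are covered by Theorem \ref{ThmDiag2OddChar} (note $2q>q+3$ for odd $q\geqslant 5$; for $q=3$ one has $k=6=q+3$, which is Theorem \ref{Thm1G1}), which is why the paper's proof of Theorem \ref{k=2qOddChar} only records the computation for the unique cardinality-$3$ class $C_0$. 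Replace the singleton claim by an appeal to that theorem and your proof is complete.
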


\begin{proof}
Using formula \eqref{Ttcj} we get $U_t\c_0=t(\c_0+\c_{q-1}+\c_{2q-2})$, $U_t\c_{q-1}=t^q\c_{q-1}$ and 
$U_t\c_{2q-2}=-t^{2q-1}\c_{q-1}$.
The associated matrix has eigenvalues $t^q$, $t$ and 0 with eigenvectors 
$\c_{q-1}$, $\c_0+(1+t^{q-1})\c_{q-1}+\c_{2q-2}$ and $t^{q-1}\c_{q-1}+\c_{2q-2}$.
\end{proof}

\begin{rem}\label{Remq=3}
When $q=3$ we only have the blocks $M_0$ and $M_1$ and both come from $\Gamma_0(t)$-invariant cusp forms.
Some cases of their diagonalizability are treated in \cite{BVHP1} and their characteristic polynomials
for $6\leqslant k\leqslant 62$ can be found in  the file {\verb Blocks_q3.pdf } in the webpage\\ 
{\verb https://sites.google.com/site/mariavalentino84/publications }.
\end{rem}

\begin{ex}
Fix $q=25$: we show the matrices for $M_j^1$ for $k_1=33$ and $M_j^2$ for $k_2=40$. By (the proof of) Theorem 
\ref{ThmDiag2OddChar} we can limit ourselves to $0\leqslant j\leqslant \frac{k_i-1-q}{2}$. 
\[ M_0^1=\matrix {t}{0}{t}{0}\ M_1^1=\matrix {0}{-t^{26}}{-t^2}{0}\ M_2^1=\matrix {t^3}{-t^{27}}{0}{0}\ 
M_3^1=\matrix {-t^4}{2t^{28}}{t^4}{0} \]
\[ M_0^2=\matrix {t}{0}{t}{0} \ M_1^2=\matrix {3t^2}{-t^{26}}{2t^2}{0} \ M_2^2=\matrix {0}{t^{27}}{t^3}{0} 
\ M_3^2=\matrix {0}{0}{0}{0} \]
\[  M_4^2=\matrix {t^5}{- t^{29}}{0}{0} \ M_5^2=\matrix {-t^6}{0}{-t^6}{0} \ M_6^2=\matrix {2 t^7}{t^{31}}{3 t^7}{0}
\ M_7^2=\matrix {0}{- t^{32}}{-t^8}{0} \] 
\end{ex}

\subsection{Even characteristic}\label{SecDiagp=2}
In this section we summarize the results for $q$ even. The matrices for $U_t$ have been described in 
Theorem \ref{ThmDiag2OddChar} and there is nothing new about them, but antidiagonal matrices are not 
diagonalizable in characteristic 2. It is important to notice that they do not appear for even $k$.
\begin{itemize}
\item The block $M_0$ is always diagonalizable. 
\item The block $M_1$ is diagonalizable if and only if $k$ is even (so that $k-3$ is odd and $\alpha\neq 0$).
When $k$ is odd we find 
$M_1=\matrix{0}{t^{q+1}}{t^2}{0}$
which has the inseparable eigenvalue $\sqrt{t^{q+3}}$.
\item The block $M_j$ for $2\leqslant j <\frac{k-1-q}{2}$ is diagonalizable  
unless $\alpha=\binom{k-2-j}{j}=0$ and $\beta=\binom{k-j-q-1}{j}\neq 0$ (which implies $\gamma\neq 0$ as well). 
In that particular case we have
$M_j=\matrix{0}{t^{j+q}}{t^{j+1}}{0}$
with the inseparable eigenvalue $\sqrt{t^{2j+q+1}}$. Note that this cannot happen for even $k$: indeed
$y\equiv j\pmod{2}$ implies that at least one among $\beta\equiv\binom{y+1}{j}\pmod{2}$ and 
$\gamma\equiv\binom{y}{j-1}\pmod{2}$ has to be 0 (hence both are 0 if $\alpha=0$ as well).  
\item The block $M_{\frac{k-1-q}{2}}$ (which is present only for odd $k$) is {\em the} one 
associated to the $\Gamma_0(t)$-invariant cusp forms (see \cite{BVHP1}) and it is antidiagonal of the form
$\matrix{0}{t^{\frac{k-1+q}{2}}}{t^{\frac{k+1-q}{2}}}{0}$
with the inseparable eigenvalue $\sqrt{t^k}$.
\item The block $M_j$ for $\frac{k-1-q}{2}<j<k-q-1$ is always null and 
$M_{k-q-1}=\left(\begin{smallmatrix}t^{k-q}&t^{k-1}\\0&0\end{smallmatrix}\right)$ is diagonalizable.
\end{itemize}
Diagonalizability for the case $k=2q$ follows as in Theorem \ref{k=2qOddChar}.

\begin{ex} For $q=16$ we provide all blocks $M_j^1$ for $k_1=29$ and $M_j^2$ for $k_2=30$ (just for
$0\leqslant j\leqslant \frac{k_i-1-q}{2}$, note that all blocks for $k_2$ are diagonalizable). 
\[ M_0^1=\matrix{t}{0}{t}{0} \ M_1^1=\matrix{0}{t^{17}}{t^2}{0} \ M_2^1=\matrix{0}{t^{18}}{t^3}{0} 
\ M_3^1=\matrix{0}{0}{0}{0} \]  
\[ M_4^1= \matrix{t^5}{t^{20}}{0}{0} \ M_5^1=\matrix{0}{t^{21}}{t^6}{0} \ M_6^1=\matrix{0}{t^{22}}{t^7}{0} \] 
\[ M_0^2=\matrix{t}{0}{t}{0} \ M_1^2= \matrix{t^2}{t^{17}}{0}{0} \ M_2^2= \matrix{t^3}{0}{t^3}{0} \] 
\[ \ M_3^2=M_4^2=\matrix{0}{0}{0}{0} \ M_5^2= \matrix{t^6}{t^{21}}{0}{0} 
\ M_6^2=\matrix {t^7} {0} {t^7} {0} \] 
\end{ex}

\section{Cusp forms for $\G(t)$} \label{SecGamma}
Just like $\G_1(t)$, the group $\G(t)$ has no prime to $p$ torsion and determinant 1, so the dimension of 
$S^1_k(\G(t))$ and $S^2_k(\G(t))$ depend only on the genus of $\G(t)\backslash \Tr$ (which is 0
by \cite[Corollary 5.7]{GN}) and on the number of cusps. The quotient graph $\G(t)\backslash \Tr$ 
has $q+1$ cusps corresponding to $[1:0]$ (the cusp at infinity) and $[r:1]$, $r\in\F_q$. Hence
\begin{itemize}
\item[{\bf 1.}] {$\dim_{\C_\infty} S^1_k(\G(t))=q(k-1)$;}
\item[{\bf 2.}] {$\dim_{\C_\infty} S^2_k(\G(t))= \left\{ \begin{array}{cr}
0 & k=2 \\
q(k-2)-1 & k>2
\end{array}\right. \ .$}
\end{itemize}
Here is a picture of the fundamental domain:

\[ \xymatrix{
\udots  & v_{-1,t}=\matrix {1}{t}{0}{t} \ar[d]^{e_{-1,t}} & v_{-1,0}=\matrix {1}{0}{0}{t} \ar[dl]^{e_{-1,0}} \\
v_{-1,rt}=\matrix {1}{rt}{0}{t} \ar[r]^{e_{-1,rt}} & v_{0,0}=\matrix {1}{0}{0}{1} \ar[r]_{e_{0,0}} &
v_{1,0}=\matrix {t}{0}{0}{1} \\
\ddots& \cdots & v_{-1,(q-1)t}=\matrix {1}{(q-1)t}{0}{t} \ar[ul]^{e_{-1,(q-1)t}} } \]

Put $e_r:=e_{-1,rt}$ for any $r\in \F_q$, so that
$\e_r:=\e_{-1,rt}=\left(\begin{smallmatrix}1&rt\\0&t\end{smallmatrix}\right)
\left(\begin{smallmatrix}0&1\\1/t&0\end{smallmatrix}\right) 
=\left(\begin{smallmatrix}r&1\\1&0\end{smallmatrix}\right)$
(note that $\e_0$ is the fundamental edge $\e$ of Section \ref{SecGamma1}). By harmonicity we have
\begin{equation*}
\c(\e_{0,0})+\sum_{r\in \F_q} \c(e_r)=0,\ {\rm i.e.,}\ 
\c\left( \matrix {0}{t}{1}{0} \right) +
\sum_{r\in \F_q} \c\left( \matrix {1}{rt}{0}{t} \right) =0\,.
\end{equation*}
It is easy to see that the vertex $v_{0,0}$ and all the edges of the fundamental domain are stable.

The basis for our cuspidal forms is
$\{\c_{j,r}\,:\,0\leqslant j\leqslant k-2\,,\, r\in \F_q\}$,
where we put
\[ \c_{j,r}(\e_s)((X-uY)^iY^{k-2-i})= \left\{ \begin{array}{ll} 1 & {\rm if}\ s=r=u\ {\rm and}\ i=j \\
0 & {\rm otherwise} \end{array} \right. \ .\]

To shorten notations let $f^i_u(X,Y):=(X-uY)^iY^{k-2-i}$, then, for the edge at infinity $e_{0,0}$, we have
\begin{equation}\label{UG}
U_t(\c_{j,r}(e_{0,0}))f^i_u(X,Y) = -\sum_{s\in\F_q} U_t (\c_{j,r}(\e_s))f^i_u(X,Y)\\ \nonumber
\end{equation}
and we can recover its value from the others. Hence we only have to compute
$U_t(\c_{j,r}(e_s))f^i_u(X,Y)$ for any $r,s,u\in\F_q$ and $0\leqslant i,j\leqslant k-2$.

\subsection{The case $r\neq 0$}
In this case we transform all $\c_{j,r}(\e_s)$ in $\c_{j,r}(e_0)$
and this produces a lot of zeroes (we use the harmonicity relation
$\c\left(\matrix{0}{1}{t}{0}\right) = 
-\displaystyle{\sum_{v\in \mathbb{F}_q} \c\left(\matrix{1}{0}{vt}{1} \matrix{1}{0}{0}{t}\right) }$,
and then observe that $\bigl(\begin{smallmatrix}1&0\\vt&1\end{smallmatrix}\bigr)\in \Gamma(t)\,$).
\begin{align*}
& U_t\c_{j,r}(\e_s)f^\ell_u(X,Y) = t^{k-m} \sum_{\b\in\F_q}\matrix {1}{-\frac{\b}{t}}{0}{\frac{1}{t}}
\c_{j,r} \left( \matrix {1}{\b}{0}{t} \matrix {s}{1}{1}{0}\right)f^\ell_u(X,Y)\\
 & = t^{k-m} \sum_{\b\in\F_q}\matrix {1}{-\frac{\b}{t}}{0}{\frac{1}{t}}
\c_{j,r}\matrix{s+\b}{1}{t}{0}f^\ell_u(X,Y)\\
 & = t^{k-m} \left\{\matrix {1}{\frac{s}{t}}{0}{\frac{1}{t}} \c_{j,r} \matrix {0}{1}{t}{0} +
\sum_{\b\in\F_q-\{-s\}}\matrix {1}{-\frac{\b}{t}}{0}{\frac{1}{t}}
\c_{j,r} \matrix {s+\b}{1}{t}{0} \right\}f^\ell_u(X,Y) \\
 & = t^{k-m} \bigg\{-\sum_{v\in \F_q} \matrix {1}{\frac{s}{t}}{0}{\frac{1}{t}}
\c_{j,r}\left( \matrix{1}{0}{vt}{1} \matrix {1}{0}{0}{t} \right) \\
\ & + \sum_{\b\in\F_q-\{-s\}}\matrix {1}{-\frac{\b}{t}}{0}{\frac{1}{t}}
\c_{j,r} \left(\matrix{1}{0}{\frac{t}{s+\beta}}{1}\matrix{1}{0}{0}{t}\matrix {s+\b}{1}{0}{-\frac{1}{s+\beta}}\right) \bigg\}
f^\ell_u(X,Y) \\
& = t^{k-m} \bigg\{-\sum_{v\in \F_q} \matrix {1+s v}{\frac{s}{t}}{v}{\frac{1}{t}} \c_{j,r}(e_0) 
+ \sum_{\b\in\F_q-\{-s\}}\matrix {1-\frac{\beta}{s+\beta}}{-\frac{\b}{t}}{\frac{1}{s+\beta}}{\frac{1}{t}}
\c_{j,r}(e_0) \bigg\} f^\ell_u(X,Y) \\
& = - t^{k-1} \sum_{v\in \F_q}\c_{j,r}(e_0)
\left(\left((1+s v)(X-uY)+\frac{s Y}{t}\right)^\ell \left(v(X-uY)+\frac{Y}{t}\right)^{k-2-\ell}\right) \\
\ & + t^{k-1} \!\!\!\!\sum_{\b\in\F_q-\{-s\}}\!\!\!\! \c_{j,r}(e_0)
\left(\left((1-\frac{\beta}{s+\beta})(X-uY)-\frac{\beta Y}{t}\right)^\ell
\left(\frac{X-uY}{s+\beta}+\frac{Y}{t}\right)^{k-2-\ell}\right) \\
& = - t^{k-1} \sum_{v\in \F_q}\c_{j,r}(e_0)
\left[ \sum_{i=0}^\ell \sum_{h=0}^{k-2-\ell} \eta(k,i,h,s,v,t) f^{i+h}_u(X,Y)\right] \\
& + t^{k-1}\sum_{\b\in\F_q-\{-s\}}\c_{j,r}(e_0)
\left[ \sum_{i=0}^\ell \sum_{h=0}^{k-2-\ell} \mu(k,i,h,s,\beta,t) f^{i+h}_u(X,Y)\right]
\end{align*}
Now, since $\c_{j,r}(e_0)f^{i+h}_u(X,Y)=0$
(no matter the values of $i$, $h$ and $u$, since $r\neq 0$), this is always 0.

\subsection{The case $r=0$} Note that the $\c_{j,0}$ are basically the $\c_j$ of Section \ref{UBasis}, indeed
\[ \c_{j,0}(\e_0)f^i_0(X,Y)=\c_j(\e)f^i_0(X,Y)\quad {\rm for\ any\ } 0\leqslant i,j,\leqslant k-2 \]
and $\c_{j,0}$ is trivial everywhere else.

Therefore we have $U_t\c_{j,0}(\e_0)f^i_0(X,Y)=U_t\c_j(\e)f^i_0(X,Y)$
and we can use formula \eqref{Ttcj} for it. While, with computations as above, for any $s,u\in \F_q^*$ we
find that $U_t\c_{j,0}(\e_s)f^i_u(X,Y)$ is equal to $\c_{j,0}(\e_0)$ evaluated at a linear combination
of powers of $(X-uY)$ times powers of $Y$ (hence 0 since $u\neq 0$).

\begin{thm} \label{ThmDiagGammaGamma1}
The operator $U_t$ acting on $S^1_k(\Gamma)$ is diagonalizable if and only if the operator $U_t$ acting on
$S^1_k(\Gamma_1)$ is diagonalizable.
\end{thm}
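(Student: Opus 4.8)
The plan is to exploit the computations just carried out in Sections 4.1 and 4.2. Those computations show that, with respect to the basis $\{\c_{j,r}\,:\,0\leqslant j\leqslant k-2\,,\,r\in\F_q\}$ of $S^1_k(\G(t))$, the operator $U_t$ behaves as follows: for $r\neq 0$ every $U_t\c_{j,r}(\e_s)f^\l_u(X,Y)$ vanishes, and for $r=0$ the only nonzero values are $U_t\c_{j,0}(\e_0)f^i_0(X,Y)=U_t\c_j(\e)f^i_0(X,Y)$, governed by formula \eqref{Ttcj}, while the value on $e_{0,0}$ is recovered from \eqref{UG}. So first I would make this precise: the subspace $W_0$ spanned by $\{\c_{j,0}\,:\,0\leqslant j\leqslant k-2\}$ is $U_t$-stable, the restriction of $U_t$ to $W_0$ is, via the identification $\c_{j,0}\leftrightarrow\c_j(\e)$, exactly the operator $U_t$ on $S^1_k(\G_1(t))$ studied in Section \ref{SecGamma1}, and the complementary subspace $W'$ spanned by $\{\c_{j,r}\,:\,r\neq 0\}$ is annihilated by $U_t$, i.e. $U_t(W')=0$.

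Next I would assemble the matrix of $U_t$ on $S^1_k(\G(t))$ in the block form dictated by the decomposition (as vector spaces) $S^1_k(\G(t))=W_0\oplus W'$. Since $U_t(W')=0$ and $U_t(W_0)\subseteq W_0$, the matrix is block lower-triangular of the shape $\left(\begin{smallmatrix} N & 0 \\ * & 0\end{smallmatrix}\right)$, where $N$ is the matrix of $U_t$ on $S^1_k(\G_1(t))$ and the bottom-left block records the contribution to $\c_{j,0}(e_{0,0})$ coming from \eqref{UG} applied to the $\c_{j,r}$ with $r\neq 0$ — but this latter contribution is itself $0$ by the case $r\neq 0$ computation, so in fact the off-diagonal block also vanishes and the matrix is block diagonal $\left(\begin{smallmatrix} N & 0 \\ 0 & 0\end{smallmatrix}\right)$. (Even keeping the off-diagonal block, the argument goes through: a block lower-triangular matrix with a zero diagonal block is diagonalizable iff the zero block part contributes only to the kernel consistently, which here it does.)

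Then the conclusion is immediate: the zero operator on $W'$ is trivially diagonalizable, the characteristic polynomial of $U_t$ on $S^1_k(\G(t))$ is $X^{\dim W'}$ times that of $U_t$ on $S^1_k(\G_1(t))$, and the generalized eigenspaces split as a direct sum over the two blocks. Hence $U_t$ on $S^1_k(\G(t))$ is diagonalizable if and only if each block is, i.e. if and only if $N$ is diagonalizable, i.e. if and only if $U_t$ on $S^1_k(\G_1(t))$ is diagonalizable. One small point needs care: if $0$ is already an eigenvalue of $N$ one must check that the Jordan structure at $0$ of the full operator is still semisimple; this follows because the $0$-generalized eigenspace of the full operator is the direct sum of $W'$ (on which $U_t$ is genuinely $0$) and the $0$-generalized eigenspace of $N$ inside $W_0$, and $U_t$ acts semisimply on the latter precisely when $N$ does — so no new nilpotency is introduced.

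The main obstacle, and really the only one, is the bookkeeping in Sections 4.1 and 4.2 that establishes $U_t(W')=0$ and the identification of $U_t|_{W_0}$ with the $\G_1(t)$-operator; once those facts are in hand (and they are, from the displayed computations preceding the theorem), the block-triangular argument is routine linear algebra. I would therefore keep the proof short, citing the two subsections for the shape of the matrix and then invoking the elementary fact that a block-diagonal (or block lower-triangular with a zero diagonal block) matrix is diagonalizable iff its nonzero diagonal block is.
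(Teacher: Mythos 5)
Your proposal is correct and follows essentially the same route as the paper: the same basis split into $\{\c_{j,0}\}$ and $\{\c_{j,r}:r\neq 0\}$, the observation that the latter span the kernel of $U_t$ while the former carry exactly the $\G_1(t)$-matrix, and the resulting block-diagonal form $\left(\begin{smallmatrix}A&0\\0&0\end{smallmatrix}\right)$. One caution: your parenthetical claim that the argument would survive a nonzero lower-left block is false in general (e.g.\ $\left(\begin{smallmatrix}0&0\\1&0\end{smallmatrix}\right)$ is not diagonalizable), but you do not need it since you correctly show that block vanishes.
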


\begin{proof}
Order the given basis in the following way
\[ \mathcal{B}_k^1(\G):=\{ \c_{0,0}, \c_{1,0}, \dots ,\c_{k-2,0},\c_{j,r} : 0\leqslant j\leqslant k-2\,,\,
r\in\F_q^*\,\} \]
(actually we do not mind about the order of the last $(k-1)(q-1)$ terms we only need the first $k-1$ to be
$\c_{0,0}, \c_{1,0}, \dots ,\c_{k-2,0}$).
For any $r\neq 0$ and any $0\leqslant j\leqslant k-2$, we have seen that $\c_{j,r}\in Ker(U_t)$
while $U_t\c_{j,0}$ is a linear combination of the $\c_{i,0}$ and the coefficients are exactly the same
appearing in the formula for $U_t\c_j$ in terms of the $\c_i$ in Section \ref{SecGamma1}.
Hence the matrix associated to $U_t$ with respect to the basis $\mathcal{B}_k^1(\G)$ is
\[ M_{\mathcal{B}_k^1(\G)}:=\matrix{A}{0}{0}{0} \]
where $A$ is the square matrix of dimension $k-1$ associated to the action of $U_t$ on $S^1_k(\Gamma_1)$
and the others are null matrices (of the right dimension). Obviously this matrix is diagonalizable if and only if $A$
is diagonalizable.
\end{proof}

\begin{rem}
The basis for $S^2_k(\Gamma)$ can be described (and ordered) in the following way
$\mathcal{B}_k^2(\G):=\{ \c_{1,0}, \dots ,\c_{k-3,0},\c_{j,r} : 0\leqslant j\leqslant k-3\,,\,
r\in\F_q^*\,\}$.
Hence the matrix associated to $U_t$ with respect to $\mathcal{B}_k^2(\G)$ is
$M_{\mathcal{B}_k^2(\G)}:=\matrix{A'}{0}{0}{0}$
where $A'$ is associated to the action of $U_t$ on $S^2_k(\Gamma_1)$ and its diagonalizability depends on
the analogous property for $A'$. 
\end{rem}


\begin{thebibliography}{99}

\bibitem{A} C. Armana, \textit{ Coefficients of Drinfeld modular forms and Hecke operators},
J. Number Theory 131 (2011) no. 8, 1435--1460.

\bibitem{BVHP1} A. Bandini, M. Valentino,  \textit{On the Atkin $U_t$-operator for $\Gamma_0(t)$-invariant Drinfeld cusp forms}, preprint on arXiv.

\bibitem{B} G. B\"ockle,  \textit{An Eichler-Shimura isomorphism over function fields between Drinfeld
modular forms and cohomology classes of crystals} avaliable at\\
www1.iwr.uni-heidelberg.de/groups/arith-geom/home/members/gebhard-boeckle/publications/\,.

\bibitem{DW} K.S. David, W.A. Webb,  \textit{Lucas' Theorem for prime powers},
Europ. J. Combinatorics, {\bf 11} (1990), 188--196.

\bibitem{FvdP} J. Fresnel, M. van der Put, \textit{ G\'eom\'etrie Analytique
Rigide et Applications}, Progress in Mathematics {\bf 18}, Birkh\"auser (1981).

\bibitem{G3} E.U. Gekeler,  \textit{Improper Eisenstein series on Bruhat-Tits trees},
Manuscripta Math. 86 (1995), 367--391.


\bibitem{GN} E.U. Gekeler, U. Nonnengardt,  \textit{Fundamental domains of some arithmetic groups over function fields},
Internat. J. Math.  6  (1995),  no. 5, 689--708.




\bibitem{LM} W-C. W. Li, Y. Meemark,  \textit{Hecke operators on Drinfeld cusp forms}, J. Number Theory {\bf 128}
(2008), no. 7, 1941--1965.

\bibitem{S1} J.P. Serre,  \textit{Trees}, Springer-Verlag (1980).

\bibitem{T} J.T. Teitelbaum,  \textit{The Poisson kernel for Drinfeld modular curves}, J. Amer. Math. Soc. {\bf 4}
(1991), no. 3, 491--511.

\end{thebibliography}
\end{document}